\newtheorem{thm}{Theorem}[section]
\newtheorem{prop}[thm]{Proposition}
\newtheorem{lem}[thm]{Lemma}
\newtheorem{defn}[thm]{Definition}
\numberwithin{equation}{section} 
\begin{document}

\title{On Singular Points and Oscillatory Integrals} 

\author{Toshio NAGANO and Naoya MIYAZAKI} 

\address{Department of Liberal Arts, Faculty of Science and Technology, Tokyo University of Science, 2641, Yamazaki, Noda, Chiba 278-8510, JAPAN} 

\address{Department of Mathematics, Faculty of Economics, Keio University, Yokohama, 223-8521, JAPAN} 

\thanks{The first author was supported by Tokyo University of Science Graduate School doctoral program scholarship 
and an exemption of the cost of equipment from 2016 to 2018 
and would like to thank to Professor Minoru Ito for 
giving me an opportunity of studies and preparing the environment.} 

\dedicatory{Dedicated to Emeritus Professor Hideki Omori on his 80th birthday} 

\keywords{Fresnel integral, oscillatory integral, stationary phase method, asymptotic expansion} 

\subjclass[2010]{Primary 42B20 ; Secondary 41A60, 33B20} 

\date {June 1, 2019} 

\begin{abstract} 
In this note, we generalize the Fresnel integrals 
using oscillatory integral, 
and then we obtain an extention of the stationary phase method. 
\end{abstract} 

\maketitle 

\section{Introduction} 

In the present note, 
we study a method for an asymptotic 
expansion of the following integral 
\begin{align} 
I(\lambda) 
:= \int_{\mathbb{R}^{n}} e^{i \lambda \phi(x)} a (x) dx \notag 
\end{align} 
as a real positive parameter $\lambda \to \infty$. 
For the purpose we first give a summary of the original method 
relating to theory of asymptotic expansion in \S 2. 
In particular, the stationary phase method is known as 
a method for an asymptotic expansion of an oscillatory integral 
of a phase function with a {\bf non-degenerate critical point}. 

In \S 3, we extend the Fresnel integrals 
by changing of a path 
for integration 
in the well-known proof using Cauchy's integral theorem. 
Next, according to oscillatory integral, 
we also obtain further generalization of the Fresnel integrals. 

Furthermore, in \S 4, 
we establish finer generalization of the stationary phase method. 

As mentioned as above, 
the several proofs of the Fresnel integrals are known (\cite{Sugiura} I p.326, II p.85, 245, etc.). 
In particular, 
we are interested in 
the proof of applying Cauchy's integral theorem 
to a function $e^{-iz^{2}}$ on the domain surrounded 
by a fan with the center at the origin of Gaussian plane (\cite{Ito-Komatsu}). 
By changing the fan used in the proof, 
we can generalize the Fresnel integrals in the following way: 
\begin{align} 
I_{p,q}^{\pm} 
:= \int_{0}^{\infty} e^{\pm ix^{p}} x^{q-1} dx 
= p^{-1} e^{\pm i\frac{\pi}{2} \frac{q}{p}} 
\varGamma \left( \frac{q}{p} \right), \notag 
\end{align} 
where $\varGamma$ is the Gamma function. 
This equalities hold 
for $p>q>0$ (Lemma \ref{Generalized the Fresnel integrals}). 
Moreover, 
by making a sense 
of these integrals via oscillatory integral, 
we obtain 
\begin{align} 
\tilde{I}_{p,q}^{\pm} 
:= \lim_{\varepsilon \to +0} \int_{0}^{\infty} 
e^{\pm ix^{p}} x^{q-1} \chi (\varepsilon x) dx 
= p^{-1} e^{\pm i\frac{\pi}{2} \frac{q}{p}} 
\varGamma \left( \frac{q}{p} \right) , \notag 
\end{align} 
where $\chi \in \mathcal{S}(\mathbb{R})$ with $\chi(0) = 1$. 
This equalities hold for $p > 0$ and $q > 0$ (Theorem \ref{th01} (i)). 
These results can be considered as an extension of 
the case of $\lambda = q-1$ and $\xi = 1$ 
in the Fourier transform of Gel'fand-Shilov 
generalized function $\mathcal{F}[{x_{+}}^{\lambda}](\xi)$ 
with $\lambda \in \mathbb{C} \setminus \{ -1 \}$ (\cite{Gel'fand-Shilov} p.170.): 
\begin{align} 
\mathcal{F}[{x_{+}}^{q-1}](1) 
:= \lim_{\tau \to +0} \int_{0}^{\infty} e^{ix} x^{q-1} e^{-\tau x} dx 
= e^{i\frac{\pi}{2}q} \varGamma (q) (1 + i0)^{-q}, \notag 
\end{align} 
where $\mathrm{Re} \tau > 0$. 
By our results, 
we can become to treat the oscillatory integrals for a phase function 
with a {\bf degenerate critical point} expressed by positive real power. 
And then, 
using the result obtained as above, 
we give an extention of the stationary phase method in one variable 
(Theorem \ref{Extension of the Stationary Phase Method} (iv)). 

To the end of \S1, we remark 
notation which will be used in this note: 

$\alpha = (\alpha_{1},\dots,\alpha_{n}) \in (\mathbb{Z}_{\geq 0})^{n}$ 
is a multi-index with a length 
$| \alpha | = \alpha_{1} + \cdots + \alpha_{n}$, 
and then, we use 
$x^{\alpha} = x_{1}^{\alpha_{1}} \cdots x_{n}^{\alpha_{n}}$, 
$\alpha! = \alpha_{1}! \cdots \alpha_{n}!$, 
$\partial_{x}^{\alpha} 
= \partial_{x_{1}}^{\alpha_{1}} \cdots \partial_{x_{n}}^{\alpha_{n}}$ 
and 
$D_{x}^{\alpha} = D_{x_{1}}^{\alpha_{1}} \cdots D_{x_{n}}^{\alpha_{n}}$, 
where 
$\partial_{x_{j}} = \frac{\partial}{\partial x_{j}}$ 
and $D_{x_{j}} = \frac{1}{i} \partial_{x_{j}}$ 
for $x = (x_{1}, \dots, x_{n})$. 

$C^{\infty}(\mathbb{R}^{n})$ is 
the set of complex-valued $C^{\infty}$ functions on $\mathbb{R}^{n}$. 
$C^{\infty}_{0}(\mathbb{R}^{n})$ is 
the set of all $f \in C^{\infty}(\mathbb{R}^{n})$ with compact support. 
$\mathcal{S}(\mathbb{R}^{n})$ is 
the Schwartz space of rapidly decreasing $C^{\infty}$ 
functions on $\mathbb{R}^{n}$, 
that is, the Fr\'{e}chet space of all $f \in C^{\infty}(\mathbb{R}^{n})$ 
such that $\sup_{x \in \mathbb{R}^{n}}
 | x^{\beta} \partial_{x}^{\alpha} f (x) | < \infty$ 
for any multi-indecies $\alpha, \beta \in (\mathbb{Z}_{\geq 0})^{n}$. 

$O$ means a Landau's symbol, that is, 
$f(x) = O(g(x))~(x \to a)$ if $|f(x)/g(x)|$ 
is bounded as $x \to a$ for functions $f$ and $g$. 

\section{Preliminary}

In this section, 
we recall the oscillatory integrals and 
the original stationary phase method. 
\begin{defn} 
Let $\lambda \geq 1$ 
and let $\phi$ be a real-valued $C^{\infty}$ function on $\mathbb{R}^{n}$ 
and $a \in C^{\infty}(\mathbb{R}^{n})$. 
If there exists 
\begin{align} 
\tilde{I}_{\phi}[a](\lambda) 
:= Os\text{-}\int_{\mathbb{R}^{n}} e^{i \lambda \phi(x)} a (x) dx 
:= \lim_{\varepsilon \to +0} \int_{\mathbb{R}^{n}} 
e^{i \lambda \phi(x)} a (x) \chi (\varepsilon x) dx \notag 
\end{align} 
independent of $\chi \in \mathcal{S}(\mathbb{R}^{n})$ with $\chi(0) = 1$ and 
$0 < \varepsilon < 1$, 
then we call $\tilde{I}_{\phi}[a](\lambda)$ an oscillatory integral 
where we call $\phi$ 
$($resp. $a$$)$ a phase function $($resp. an amplitude function$)$. 
\end{defn} 

If we suppose a certain suitable conditions for $\phi$ and $a$, 
then we can show $\tilde{I}_{\phi}[a](\lambda)$ exists independent of 
$\chi$ and $\varepsilon$. 
The fundamental properties are the following 
(cf. \cite{Kumano-go} p.47.): 
\begin{prop} 
\label{chi epsilon} 
Assume that $\chi \in \mathcal{S}(\mathbb{R}^{n})$ with $\chi(0) = 1$. 
Then 
\begin{enumerate} 
\item[(i)] 
$\chi(\varepsilon x) \to 1$ uniformly on any compact set in $\mathbb{R}^{n}$ as $\varepsilon \to +0$. 
\item[(ii)] 
For each multi-index $\alpha \in (\mathbb{Z}_{\geq 0})^{n}$, 
there exists a positive constant $C_{\alpha}$ independent of $0 < \varepsilon < 1$ 
such that 
for $x \in \mathbb{R}^{n}$ 
\begin{align} 
| \partial_{x}^{\alpha} (\chi(\varepsilon x)) | \leq C_{\alpha} (1+|x|^{2})^{-|\alpha|/2}. \notag 
\end{align} 
\item[(iii)] 
For any multi-index $\alpha \in (\mathbb{Z}_{\geq 0})^{n}$ with $\alpha \ne 0$, 
$\partial_{x}^{\alpha} \chi(\varepsilon x) \to 0$ uniformly in $\mathbb{R}^{n}$ as $\varepsilon \to +0$. 
\end{enumerate} 
\end{prop}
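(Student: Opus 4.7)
The plan is to verify the three assertions in order, each being essentially a consequence of the Schwartz decay combined with the smallness of $\varepsilon$, and then to record where the constants come from.

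For (i), I would simply invoke continuity of $\chi$ at the origin. Given a compact set $K \subset \mathbb{R}^{n}$, set $R = \sup_{x \in K}|x|$. For any $\delta > 0$, since $\chi$ is continuous and $\chi(0) = 1$, there exists $\eta > 0$ with $|\chi(y) - 1| < \delta$ whenever $|y| < \eta$. Taking $\varepsilon < \eta/(R+1)$ ensures $|\varepsilon x| < \eta$ uniformly for $x \in K$, which delivers uniform convergence.

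For (ii), which is the main computational step, I would differentiate via the chain rule to obtain $\partial_{x}^{\alpha}(\chi(\varepsilon x)) = \varepsilon^{|\alpha|}(\partial^{\alpha}\chi)(\varepsilon x)$. Since $\chi \in \mathcal{S}(\mathbb{R}^{n})$, the seminorm
\begin{align}
M_{\alpha} := \sup_{y \in \mathbb{R}^{n}} (1+|y|^{2})^{|\alpha|/2} |(\partial^{\alpha}\chi)(y)| \notag
\end{align}
is finite. Applying this at $y = \varepsilon x$ and multiplying by $\varepsilon^{|\alpha|}$ gives the bound
\begin{align}
|\partial_{x}^{\alpha}(\chi(\varepsilon x))| \leq M_{\alpha} \varepsilon^{|\alpha|}(1+\varepsilon^{2}|x|^{2})^{-|\alpha|/2} = M_{\alpha}(\varepsilon^{-2}+|x|^{2})^{-|\alpha|/2}. \notag
\end{align}
The key observation is then that $0 < \varepsilon < 1$ forces $\varepsilon^{-2} > 1$, so $\varepsilon^{-2}+|x|^{2} \geq 1+|x|^{2}$, and hence the right-hand side is dominated by $M_{\alpha}(1+|x|^{2})^{-|\alpha|/2}$. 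Setting $C_{\alpha} := M_{\alpha}$ yields the desired inequality independent of $\varepsilon$.

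For (iii), I would use the even cruder estimate $|\partial_{x}^{\alpha}(\chi(\varepsilon x))| = \varepsilon^{|\alpha|}|(\partial^{\alpha}\chi)(\varepsilon x)| \leq \varepsilon^{|\alpha|} \|\partial^{\alpha}\chi\|_{L^{\infty}(\mathbb{R}^{n})}$, where the sup norm is finite because $\partial^{\alpha}\chi \in \mathcal{S}(\mathbb{R}^{n})$. Since $|\alpha| \geq 1$, the factor $\varepsilon^{|\alpha|}$ tends to $0$ as $\varepsilon \to +0$, and the bound is uniform in $x$. There is no real obstacle here; the only delicate point is the algebraic identity $\varepsilon^{|\alpha|}(1+\varepsilon^{2}|x|^{2})^{-|\alpha|/2} = (\varepsilon^{-2}+|x|^{2})^{-|\alpha|/2}$ in part (ii), which is the reason the constant $C_{\alpha}$ can be chosen independent of $\varepsilon \in (0,1)$.
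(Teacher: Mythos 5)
Your proof is correct, and since the paper states this proposition without proof (referring only to Kumano-go, where essentially this standard argument appears), there is nothing divergent to compare: the chain-rule identity $\partial_{x}^{\alpha}(\chi(\varepsilon x))=\varepsilon^{|\alpha|}(\partial^{\alpha}\chi)(\varepsilon x)$, the Schwartz seminorm bound, and the rewriting $\varepsilon^{|\alpha|}(1+\varepsilon^{2}|x|^{2})^{-|\alpha|/2}=(\varepsilon^{-2}+|x|^{2})^{-|\alpha|/2}\leq(1+|x|^{2})^{-|\alpha|/2}$ for $0<\varepsilon<1$ are exactly the intended mechanism. All three parts check out as written.
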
 

Next we recall the Fresnel integrals (cf. \cite{Ito-Komatsu} p.23.). 
\begin{prop}
The following integrals, which called Fresnel integrals, hold. 
\begin{align} 
\int_{-\infty}^{\infty} e^{\pm ix^{2}} dx = \sqrt{\pi} e^{\pm i \frac{\pi}{4}}, \notag 
\end{align} 
where double signs $\pm$ in same order. 
\end{prop}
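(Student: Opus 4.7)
The plan is to apply Cauchy's integral theorem to the entire function $e^{iz^{2}}$ on a fan-shaped (circular sector) region in the complex plane, as hinted at in the introduction, and then let the radius tend to infinity. Since $x \mapsto e^{\pm ix^{2}}$ is even, I would write $\int_{-\infty}^{\infty} = 2\int_{0}^{\infty}$ and treat the $+$ sign in detail; the $-$ case then follows either by a mirror-image sector in the lower half plane or by complex conjugation.

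For fixed $R>0$, the contour $\Gamma_{R}$ consists of three pieces: the segment $[0,R]$ on the real axis, the circular arc $C_{R}=\{Re^{i\theta}:0\le \theta \le \pi/4\}$, and the segment from $Re^{i\pi/4}$ back to the origin along the ray of argument $\pi/4$. Since $e^{iz^{2}}$ is entire, Cauchy's theorem applied to $\Gamma_{R}$ yields
\begin{align}
\int_{0}^{R} e^{ix^{2}}\,dx + \int_{C_{R}} e^{iz^{2}}\,dz = \int_{0}^{R} e^{i(te^{i\pi/4})^{2}}\, e^{i\pi/4}\,dt. \notag
\end{align}
On the slanted ray the phase simplifies pleasantly: $(te^{i\pi/4})^{2}=it^{2}$, so $e^{i(te^{i\pi/4})^{2}}=e^{-t^{2}}$, and the right-hand side equals $e^{i\pi/4}\int_{0}^{R}e^{-t^{2}}\,dt$, which converges to $e^{i\pi/4}\cdot\sqrt{\pi}/2$ as $R\to\infty$ by the classical Gaussian integral.

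The main obstacle is showing that the arc contribution vanishes. Parametrizing $z=Re^{i\theta}$ gives $|e^{iz^{2}}|=e^{-R^{2}\sin 2\theta}$, so the arc integral is bounded by $R\int_{0}^{\pi/4}e^{-R^{2}\sin 2\theta}\,d\theta$. Here I would invoke the Jordan-type estimate $\sin 2\theta \ge 4\theta/\pi$ valid on $[0,\pi/4]$, which dominates the bound by
\begin{align}
R\int_{0}^{\pi/4} e^{-4R^{2}\theta/\pi}\,d\theta = \frac{\pi}{4R}\bigl(1-e^{-R^{2}}\bigr) \longrightarrow 0 \quad (R\to\infty). \notag
\end{align}
This convexity-of-sine bound is the only non-trivial estimate in the argument; once it is in place everything else is algebraic manipulation of the Gaussian.

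Combining the two limits yields $\int_{0}^{\infty} e^{ix^{2}}\,dx = \tfrac{\sqrt{\pi}}{2}\,e^{i\pi/4}$, hence $\int_{-\infty}^{\infty} e^{ix^{2}}\,dx = \sqrt{\pi}\,e^{i\pi/4}$. Taking complex conjugates of the whole argument (or equivalently using the mirrored sector of angle $-\pi/4$) delivers $\int_{-\infty}^{\infty} e^{-ix^{2}}\,dx = \sqrt{\pi}\,e^{-i\pi/4}$, so that the double signs correspond. This approach is precisely the one we will generalize in \S 3 by replacing the opening angle $\pi/4$ of the fan by $\pi q/(2p)$ and the exponent $2$ by $p$.
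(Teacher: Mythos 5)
Your proof is correct and is essentially the argument the paper has in mind: the paper states this proposition without proof, citing \cite{Ito-Komatsu}, but the introduction explicitly describes that reference's proof as applying Cauchy's integral theorem to $e^{\pm iz^{2}}$ on a fan-shaped sector, which is exactly your contour (and the special case $p=2$, $q=1$ of the contour used for Lemma \ref{Generalized the Fresnel integrals}). Your estimates on the arc via Jordan's inequality and the reduction of the slanted ray to the Gaussian integral are both sound, so nothing is missing.
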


Next 
we summarize the Fourier transforms of rapidly decreasing 
$C^{\infty}$ functions. 
\begin{defn} 
\label{the Fourier transform} 
Let $f \in \mathcal{S}(\mathbb{R}^{n})$. 
Then 
we define by $\widehat{f}=\mathcal{F}[f]$ the Fourier transform of $f$ as 
\begin{align} 
\mathcal{F}[f](\xi) 
:= \frac{1}{(2\pi)^{\frac{n}{2}}} \int_{\mathbb{R}^{n}} 
e^{-i\langle x,\xi \rangle} f(x) dx, \notag 
\end{align} 
where $\langle x,\xi \rangle :=\sum_{k=1}^{n}x_{k} \xi _{k}$ 
for $x=(x_{1},\dots,x_{n})$ and $\xi =(\xi_{1},\dots,\xi _{n}) 
\in \mathbb{R}^{n}$. 
\end{defn} 

If $A$ is a real symmetric non-singular $n \times n$ matrix, 
then the Fourier transform of $e^{i (1/2) \langle Ax,x \rangle}$ is given 
in the following way (cf. \cite{Hormander01}, \cite{Hormander02}, 
\cite{Duistermaat01}, \cite{Grigis-Sjostrand}, \cite{Fujiwara1}).
\begin{prop} 
\label{the Fourier transform of e_ix2} 
Let $A$ be a real symmetric non-singular $n \times n$ matrix with ``$p$" 
positive and ``$n-p$" negative eigenvalues. 
\begin{enumerate} 
\item[(i)] 
If $A = \pm 1$ for $n=1$, 
then 
\begin{align} 
\mathcal{F}[e^{\pm i\frac{1}{2}x^{2}}](\xi) 
= e^{\pm i\frac{\pi}{4}} e^{\mp i\frac{1}{2} \xi^{2}}, \notag 
\end{align} 
where double signs $\pm, \mp$ in same order. 
\item[(ii)] 
If $n \geq 1$, 
then 
\begin{align} 
\mathcal{F}[e^{i \frac{1}{2} \langle Ax,x \rangle}](\xi) 
= \frac{e^{i \frac{\pi}{4} \mathrm{sgn}A}}{|\det A|^{\frac{1}{2}}} 
e^{-i \frac{1}{2} \langle A^{-1}\xi,\xi \rangle}, \notag 
\end{align} 
where $\mathrm{sgn}A := p-(n-p)$. 
\end{enumerate}
\end{prop}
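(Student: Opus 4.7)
The overall plan is to reduce the multidimensional claim (ii) to the one-variable claim (i) via orthogonal diagonalization, and to prove (i) by completing the square in the exponent and invoking the Fresnel integrals of the preceding proposition.

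For part (i), my approach is to write the combined exponent in the form
\begin{align*}
\pm i\tfrac{1}{2}x^{2} - ix\xi = \pm i\tfrac{1}{2}(x\mp\xi)^{2} \mp i\tfrac{1}{2}\xi^{2},
\end{align*}
so that the $\xi$-dependent factor $e^{\mp i\xi^{2}/2}$ pulls outside the integral. After the translation $u = x \mp \xi$, the remaining integral is $\int_{\mathbb{R}} e^{\pm iu^{2}/2}\,du$, which by the rescaling $u = \sqrt{2}\,v$ becomes $\sqrt{2}\int_{\mathbb{R}} e^{\pm iv^{2}}\,dv = \sqrt{2\pi}\,e^{\pm i\pi/4}$ thanks to the Fresnel integral formula. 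Dividing by $\sqrt{2\pi}$ from the definition of $\mathcal{F}$ yields exactly the stated $e^{\pm i\pi/4}e^{\mp i\xi^{2}/2}$.

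For part (ii), since $A$ is real symmetric I would pick an orthogonal $O$ with $O^{\mathsf{T}} A O = D = \mathrm{diag}(\lambda_{1},\ldots,\lambda_{n})$, where $p$ of the eigenvalues are positive. The substitution $y = O^{\mathsf{T}} x$ has unit Jacobian, preserves the inner product, and sends $\xi$ to $\eta := O^{\mathsf{T}}\xi$, so the $n$-dimensional Fourier transform splits into a product of one-variable integrals
\begin{align*}
\mathcal{F}[e^{i\langle Ax,x\rangle/2}](\xi)
= \frac{1}{(2\pi)^{n/2}}\prod_{j=1}^{n}\int_{\mathbb{R}} e^{i\lambda_{j} y_{j}^{2}/2 - iy_{j}\eta_{j}}\,dy_{j}.
\end{align*}
Each factor is evaluated by the further rescaling $y_{j} = |\lambda_{j}|^{-1/2} u_{j}$, which reduces the phase to $\mathrm{sgn}(\lambda_{j})\,u_{j}^{2}/2$ and feeds directly into case (i). Assembling the factors, the prefactors combine as $\prod_{j}|\lambda_{j}|^{-1/2} = |\det A|^{-1/2}$, the signature exponents give $\sum_{j}\mathrm{sgn}(\lambda_{j})\cdot\pi/4 = (\pi/4)\,\mathrm{sgn}A$, and the quadratic tails collapse to $\sum_{j}\eta_{j}^{2}/\lambda_{j} = \langle D^{-1}\eta,\eta\rangle = \langle A^{-1}\xi,\xi\rangle$, giving the claimed identity.

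The main obstacle is rigor: none of the one-variable integrals above converge absolutely, so neither the translation $u = x \mp \xi$ nor the manipulations of the product integrand in (ii) are a priori legal as Riemann integrals over $\mathbb{R}$ or $\mathbb{R}^{n}$. I would resolve this by working throughout in the oscillatory-integral sense of the preceding Definition, inserting a Schwartz cutoff $\chi(\varepsilon x)$ (or, equivalently, a Gaussian regulator $e^{-\delta|x|^{2}}$) so that every manipulation is performed on an absolutely convergent integrand, and only then passing to $\varepsilon \to +0$. The validity of this limiting procedure is precisely what the generalized Fresnel formula $\tilde{I}_{p,q}^{\pm}$ of the introduction is designed to secure, with the relevant case here being $p=2$. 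Once the limiting step is justified, the remainder of the argument is purely algebraic.
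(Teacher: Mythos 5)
Your argument is essentially correct, but note that the paper itself offers no proof of this proposition at all: it is recalled as a known fact, with the reader pointed to H\"ormander, Duistermaat, Grigis--Sj\"ostrand and Fujiwara. So there is no ``paper's route'' to compare against; what you have written is the standard textbook derivation, and it is the right one. The algebra checks out: the completion of the square $\pm i\tfrac12 x^{2}-ix\xi=\pm i\tfrac12(x\mp\xi)^{2}\mp i\tfrac12\xi^{2}$ together with the Fresnel integral of Proposition 2.3 gives (i); orthogonal diagonalization $O^{\mathsf T}AO=D$, the rescaling $y_{j}=|\lambda_{j}|^{-1/2}u_{j}$, and the identities $\prod_{j}|\lambda_{j}|^{-1/2}=|\det A|^{-1/2}$, $\sum_{j}\mathrm{sgn}(\lambda_{j})=\mathrm{sgn}A$, $\langle D^{-1}O^{\mathsf T}\xi,O^{\mathsf T}\xi\rangle=\langle A^{-1}\xi,\xi\rangle$ give (ii). You also correctly identify the only real issue, namely that none of the integrals converge absolutely, and your proposed fix (a Gaussian regulator $e^{-\delta|x|^{2}/2}$, compute exactly for $\delta>0$ via $\int e^{-zu^{2}/2}du=\sqrt{2\pi/z}$ with the principal branch, then let $\delta\to+0$ in $\mathcal S'$) is the standard and correct one; it is cleaner than appealing to the cutoff $\chi(\varepsilon x)$, since with the Gaussian regulator Fubini and the translation are immediate. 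Two small cautions: the appeal to Theorem \ref{th01} is unnecessary and slightly awkward chronologically (that theorem appears later in the paper; the classical Fresnel integral of Proposition 2.3 is all you need, and there is no circularity since Theorem \ref{th01} does not rest on this proposition), and you should state explicitly that the final identity is an equality of tempered distributions, the left side being defined by continuity of $\mathcal F$ on $\mathcal S'$.
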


By Propsition \ref{the Fourier transform of e_ix2}, 
we can obtain an asymptotic expansion of the oscillatory integral 
with a non-degenerate quadratic phase $\phi(x) = (1/2) \langle Ax,x \rangle$ 
in the following way. 
\begin{prop} 
\label{quadratic phase} 
Suppose that $\lambda \geq 1$, 
$a \in \mathcal{S}(\mathbb{R}^{n})$ 
and $A$ is a real symmetric non-singular $n \times n$ matrix. 
Then, 
there exists a positive constant $C$ such that 
for any $N \in \mathbb{N}$
\begin{align} 
&\int_{\mathbb{R}^{n}} e^{i\frac{1}{2} \lambda \langle Ax,x \rangle} a(x) dx \notag \\ 
&= (2\pi)^{\frac{n}{2}} \frac{e^{i\frac{\pi}{4} 
\mathrm{sgn}A}}{|\det A|^{\frac{1}{2}}} 
\sum_{k=0}^{N-1} \frac{1}{k!} 
\Big( -i\frac{1}{2} \langle A^{-1} D_{x},D_{x} \rangle \Big)^{k} \Big|_{x=0} 
a(x) \lambda^{-k-\frac{n}{2}} + R_{N}(\lambda) \notag 
\end{align} 
and 
\begin{align} 
|R_{N}(\lambda)|
\leq (2\pi)^{\frac{n}{2}} \frac{C}{|\det A|^{N+\frac{1}{2}}} 
\frac{1}{N!} \Big( \sum_{|\alpha| \leq 2(N+n)} 
\int_{\mathbb{R}^{n}} |\partial_{x}^{\alpha} a(x)| dx \Big) 
\lambda^{-N-\frac{n}{2}}. \notag 
\end{align} 
\end{prop}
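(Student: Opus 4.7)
The strategy is to move to the Fourier side via Parseval's formula, Taylor-expand in the small parameter $\lambda^{-1}$, and estimate the tail using the rapid decay of $\hat a$.

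First, since $a\in\mathcal S(\mathbb R^n)$ and $e^{i\lambda\langle Ax,x\rangle/2}$ is bounded, the left-hand integral converges absolutely. I would apply Parseval in the form $\int f(x)g(x)\,dx=\int\hat f(\xi)\hat g(-\xi)\,d\xi$ to $f(x)=e^{i\lambda\langle Ax,x\rangle/2}$ and $g=a$, with $\hat f$ computed by Proposition~\ref{the Fourier transform of e_ix2}(ii) applied to the matrix $\lambda A$; the formula extends to this tempered-distribution setting by regularising $f$ with $e^{-\varepsilon|x|^{2}}$ and letting $\varepsilon\to 0$. Using $\mathrm{sgn}(\lambda A)=\mathrm{sgn}A$, $|\det(\lambda A)|^{1/2}=\lambda^{n/2}|\det A|^{1/2}$, $(\lambda A)^{-1}=\lambda^{-1}A^{-1}$, and the symmetry $\xi\mapsto-\xi$ of the even quadratic form, the integral reduces to
\begin{align}
\frac{e^{i\frac{\pi}{4}\mathrm{sgn}A}}{\lambda^{n/2}|\det A|^{1/2}}\int_{\mathbb R^n}e^{-i\frac{1}{2\lambda}\langle A^{-1}\xi,\xi\rangle}\,\hat a(\xi)\,d\xi. \notag
\end{align}

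Second, I Taylor expand $e^{-i\langle A^{-1}\xi,\xi\rangle/(2\lambda)}=\sum_{k=0}^{N-1}\tfrac{1}{k!}\bigl(-\tfrac{i}{2\lambda}\langle A^{-1}\xi,\xi\rangle\bigr)^{k}+\rho_{N}(\xi,\lambda)$, with the standard bound $|\rho_{N}|\le\tfrac{1}{N!}\bigl(|\langle A^{-1}\xi,\xi\rangle|/(2\lambda)\bigr)^{N}$. The rule $\xi^{\alpha}\hat a(\xi)=\widehat{D_{x}^{\alpha}a}(\xi)$ lifts to $\langle A^{-1}\xi,\xi\rangle^{k}\hat a(\xi)=\widehat{\langle A^{-1}D_{x},D_{x}\rangle^{k}a}(\xi)$, and Fourier inversion at the origin, $\int\hat F(\xi)\,d\xi=(2\pi)^{n/2}F(0)$, turns the $k$-th main term into $(2\pi)^{n/2}\bigl(\langle A^{-1}D_{x},D_{x}\rangle^{k}a\bigr)(0)$. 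Combining with the prefactor above reproduces exactly the claimed asymptotic sum.

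Third, for the remainder $R_{N}(\lambda)$ I bound $|\langle A^{-1}\xi,\xi\rangle|\le c(A)|\xi|^{2}$ (with $c(A)$ controlled by entries of the adjugate of $A$ divided by $|\det A|$), reducing matters to estimating $\int|\xi|^{2N}|\hat a(\xi)|\,d\xi$. I would split $|\xi|^{2N}|\hat a(\xi)|\le(1+|\xi|^{2})^{-n}\cdot(1+|\xi|^{2})^{N+n}|\hat a(\xi)|$, note that $(1+|\xi|^{2})^{-n}\in L^{1}(\mathbb R^{n})$ for $n\ge 1$, and use the routine inequality
\begin{align}
\sup_{\xi}(1+|\xi|^{2})^{N+n}|\hat a(\xi)|\le C_{1}\sum_{|\alpha|\le 2(N+n)}\int_{\mathbb R^{n}}|\partial_{x}^{\alpha}a(x)|\,dx, \notag
\end{align}
obtained from the multinomial expansion of $(1+|\xi|^{2})^{N+n}$ together with $\xi^{\alpha}\hat a=\widehat{D_{x}^{\alpha}a}$. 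Absorbing the quantities that depend only on $n$ and $A$ into a single constant $C$ produces the stated inequality with the prefactor $|\det A|^{-(N+1/2)}\lambda^{-N-n/2}/N!$.

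The main technical point is the last step: one must route the quadratic form $\langle A^{-1}\xi,\xi\rangle$ through a bound that involves only $|\det A|$, so that precisely the exponent $N+\tfrac12$ in the proposition emerges, and simultaneously pick a polynomial weight whose integrability on $\mathbb R^n$ dictates the exact range $|\alpha|\le 2(N+n)$ of derivatives of $a$ appearing on the right-hand side; the rest of the proof is a routine bookkeeping of Fourier-theoretic identities.
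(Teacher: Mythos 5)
The paper gives no proof of this proposition at all --- it is recalled as a known preliminary from the cited references, with only the remark that it follows from Proposition~\ref{the Fourier transform of e_ix2} --- and your Parseval-plus-Taylor-expansion argument is exactly that standard derivation; the reduction to the Fourier side, the identification of the main terms via $\xi^{\alpha}\hat a=\widehat{D_x^{\alpha}a}$ and inversion at the origin, and the splitting $|\xi|^{2N}|\hat a|\le (1+|\xi|^2)^{-n}\cdot(1+|\xi|^2)^{N+n}|\hat a|$ are all correct and yield the stated exponents. The one point to flag is that the quantities you propose to ``absorb into $C$'' --- the factor $\|\mathrm{adj}\,A\|^{N}$ from $|\langle A^{-1}\xi,\xi\rangle|^{N}$ and the multinomial constants from expanding $(1+|\xi|^2)^{N+n}$ --- grow like $c^{N}$, so your argument (like the references') produces a constant depending on $N$, whereas the proposition as literally worded asserts a single $C$ uniform in $N$; this is a defect of the statement rather than of your proof, but it should be stated explicitly rather than hidden in the final absorption step.
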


In order to treat more general cases of the phase function, 
we prepare the following two lemmas. 
The first one is the Morse lemma 
(\cite{Milnor}, \cite{Duistermaat01}, \cite{Grigis-Sjostrand}, \cite{Fujiwara1}). 

\begin{lem} 
Let $\phi$ be a real-valued $C^{\infty}$ function on 
a neighborhood of $\bar{x}$ in $\mathbb{R}^{n}$  
such that $\bar{x}$ is an only non-degenerate critical point of $\phi$, 
that is, 
if and only if $\nabla \phi (\bar{x})=0$, 
and $\det \mathrm{Hess} \phi (\bar{x}) \ne 0$. 
Then 
there exist neighborhoods $U$ of $\bar{x}$ and $V$ of $0$ in $\mathbb{R}^{n}$ 
and $C^{\infty}$ diffeomorphism $\varPhi : V \longrightarrow U$ 
such that $x=\varPhi (y)$ for $x=(x_{1},\dots,x_{n}) \in U$ 
and $y=(y_{1},\dots,y_{n}) \in V$ 
and 
\begin{align} 
\phi (x) - \phi (\bar{x}) 
= \frac{1}{2} (y_{1}^{2} + \cdots + 
y_{p}^{2} - y_{p+1}^{2} - \cdots - y_{n}^{2}), \notag 
\end{align} 
where $\mathrm{Hess} \phi (\bar{x}) 
:= (\partial ^{2} 
\phi (\bar{x})/\partial x_{i} \partial x_{j})_{i,j=1,\dots,n}$ 
is Hessian matrix of $\phi$ 
at $\bar{x}$ with ``$p$" positive and ``$n-p$" negative eigenvalues. 
\end{lem}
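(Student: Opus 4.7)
The plan is to prove the Morse lemma by the classical inductive diagonalization of the quadratic form representing $\phi$, repeatedly completing squares and invoking the inverse function theorem to ensure each coordinate change is a local $C^\infty$ diffeomorphism.

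First I would translate so that $\bar{x} = 0$ and $\phi(0) = 0$. Applying the fundamental theorem of calculus twice and using $\nabla \phi(0) = 0$, I obtain the representation
\[
\phi(x) = \sum_{i,j=1}^{n} x_{i} x_{j} h_{ij}(x), \qquad h_{ij}(x) = \int_{0}^{1} \int_{0}^{1} t\, \partial_{i} \partial_{j} \phi(stx)\, ds\, dt,
\]
where each $h_{ij}$ is smooth near $0$, $h_{ij} = h_{ji}$, and $h_{ij}(0) = \tfrac{1}{2} \partial_{i}\partial_{j} \phi(0)$. Thus the symmetric matrix $(h_{ij}(0))$ is a nonzero scalar multiple of $\mathrm{Hess}\,\phi(0)$ and is non-singular.

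Next I would inductively diagonalize. Assume that after $k-1$ steps there exist coordinates $(y_{1}, \ldots, y_{k-1}, x_{k}, \ldots, x_{n})$ near $0$ in which
\[
\phi = \tfrac{1}{2}\bigl( \epsilon_{1} y_{1}^{2} + \cdots + \epsilon_{k-1} y_{k-1}^{2} \bigr) + \sum_{i,j \geq k} x_{i} x_{j} H_{ij}(y_{1}, \ldots, y_{k-1}, x_{k}, \ldots, x_{n}),
\]
with $\epsilon_{j} \in \{\pm 1\}$, the matrix $(H_{ij})_{i,j \geq k}$ smooth, symmetric, and non-singular at $0$. By a preliminary linear change in $x_{k}, \ldots, x_{n}$ (allowed since any non-singular symmetric matrix has a nonzero diagonal entry after orthogonal conjugation), I may assume $H_{kk}(0) \neq 0$. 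Then the substitution
\[
y_{k} = \sqrt{2\,|H_{kk}|} \Bigl( x_{k} + \sum_{j>k} \tfrac{H_{kj}}{H_{kk}} x_{j} \Bigr), \qquad \epsilon_{k} = \mathrm{sgn}\, H_{kk}(0),
\]
completes the square in the $x_{k}$ variable and produces the new term $\tfrac{1}{2} \epsilon_{k} y_{k}^{2}$, while the residual quadratic form in $x_{k+1}, \ldots, x_{n}$ acquires a new smooth symmetric coefficient matrix whose determinant at $0$ equals the old $(n-k+1) \times (n-k+1)$ determinant divided by $H_{kk}(0)$; in particular it remains non-singular. The map $(y_{1}, \ldots, y_{k-1}, x_{k}, \ldots, x_{n}) \mapsto (y_{1}, \ldots, y_{k}, x_{k+1}, \ldots, x_{n})$ has triangular Jacobian at $0$ with nonzero diagonal, so by the inverse function theorem it is a local $C^{\infty}$ diffeomorphism.

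After $n$ such steps I arrive at coordinates $(y_{1}, \ldots, y_{n})$ on a neighborhood $V$ of $0$ with $\phi \circ \Phi(y) - \phi(\bar{x}) = \tfrac{1}{2} \sum_{j} \epsilon_{j} y_{j}^{2}$ for some signs $\epsilon_{j}$; permuting coordinates gathers the positive signs first. Sylvester's law of inertia applied to $\mathrm{Hess}\,\phi(\bar{x})$ forces the number of positive signs to equal $p$, giving the stated normal form. The principal technical obstacle is the inductive bookkeeping: verifying that after each completion of a square the remaining $(n-k) \times (n-k)$ block of coefficients is again smooth, symmetric, and non-singular at $0$, so that the induction can proceed and the inverse function theorem continues to apply on a (possibly shrinking) common neighborhood of $\bar{x}$.
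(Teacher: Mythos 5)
Your argument is correct: it is the classical completion-of-squares proof of the Morse lemma (Hadamard's representation $\phi(x)=\sum_{i,j}x_ix_jh_{ij}(x)$, inductive diagonalization with the inverse function theorem at each step, and Sylvester's law of inertia to identify the signature). The paper itself offers no proof of this lemma at all, merely citing Milnor, Duistermaat, Grigis--Sj\"ostrand and Fujiwara, and your proof is essentially the one found in those references, so there is nothing to compare beyond noting that you have supplied the standard argument the paper delegates to the literature.
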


The second one is for an estimation of the remainder of an asymptotic expansion for an oscillatory integral (\cite{Hormander02}, \cite{Fujiwara1}). 
\begin{lem} 
\label{Lax's technique} 
Let $\lambda \geq 1$ 
and $a \in \mathcal{S}(\mathbb{R}^{n})$ 
and let $\phi$ be a real-valued $C^{\infty}$ function on $\mathbb{R}^{n}$ 
with $| \nabla \phi (x) | \geq d > 0$ for $x \in \mathrm{supp} a$. 
Then 
for each $N \in \mathrm{N}$, 
there exists a positive constant $C_{N}$ such that 
\begin{align} 
\left| \int_{\mathbb{R}^{n}} e^{i \lambda \phi(x)} a (x) dx \right| 
\leq C_{N}(\lambda d^{2})^{-N}. \notag 
\end{align} 
\end{lem}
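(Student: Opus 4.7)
The plan is to apply the classical non-stationary phase argument via integration by parts, using the first-order differential operator that reproduces the exponential factor $e^{i\lambda\phi}$.

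Since $|\nabla\phi(x)|\geq d>0$ on $\mathrm{supp}\,a$, I would first introduce the operator
\begin{align*}
L \;=\; \frac{1}{i\lambda\,|\nabla\phi|^{2}}\sum_{j=1}^{n}(\partial_{x_{j}}\phi)\,\partial_{x_{j}},
\end{align*}
which is well-defined on a neighborhood of $\mathrm{supp}\,a$ and satisfies $L\bigl(e^{i\lambda\phi}\bigr)=e^{i\lambda\phi}$. Writing $e^{i\lambda\phi}=L^{N}\bigl(e^{i\lambda\phi}\bigr)$ and integrating by parts $N$ times, the boundary contributions at infinity vanish because $a\in\mathcal{S}(\mathbb{R}^{n})$ (assuming the derivatives of $\phi$ grow at most polynomially, so that the coefficients of $L$ have tame growth), giving
\begin{align*}
\int_{\mathbb{R}^{n}}e^{i\lambda\phi(x)}a(x)\,dx \;=\; \int_{\mathbb{R}^{n}}e^{i\lambda\phi(x)}\,({}^{t}L)^{N}a(x)\,dx,
\end{align*}
where ${}^{t}L=-\frac{1}{i\lambda}\sum_{j}\partial_{x_{j}}\!\bigl(\frac{\partial_{x_{j}}\phi}{|\nabla\phi|^{2}}\,\cdot\bigr)$ is the formal transpose of $L$.

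Next I would establish by induction on $N$ the pointwise bound
\begin{align*}
\bigl|({}^{t}L)^{N}a(x)\bigr| \;\leq\; \frac{C'_{N}}{(\lambda d^{2})^{N}}\sum_{|\alpha|\leq N}|\partial_{x}^{\alpha}a(x)|
\end{align*}
on $\mathrm{supp}\,a$, with $C'_{N}$ depending only on $N$ and on finitely many derivatives of $\phi$. Each application of ${}^{t}L$ produces, schematically, a finite sum of terms of the form $\lambda^{-1}\cdot Q(\partial\phi,\partial^{2}\phi,\dots)\,|\nabla\phi|^{-2m}\,\partial^{\beta}a$; one checks inductively that the worst negative power of $|\nabla\phi|$ after $N$ steps is controlled, and on $\mathrm{supp}\,a$ every such factor is majorized by the corresponding power of $d^{-1}$, producing the clean prefactor $(\lambda d^{2})^{-N}$. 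Integrating this bound and using that $a\in\mathcal{S}(\mathbb{R}^{n})$ makes every $\partial_{x}^{\alpha}a$ integrable yields the conclusion with $C_{N}=C'_{N}\sum_{|\alpha|\leq N}\int_{\mathbb{R}^{n}}|\partial_{x}^{\alpha}a(x)|\,dx$.

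The main obstacle is the inductive bookkeeping in this pointwise estimate: one must verify that the derivatives that fall on the rational function $(\partial_{j}\phi)/|\nabla\phi|^{2}$ at each step produce only a bounded increase in the negative power of $|\nabla\phi|$, so that after $N$ iterations the accumulated factor is at worst $C'_{N}\,d^{-2N}$ rather than a higher negative power. Once this inductive claim is in hand, combining it with the integration-by-parts identity above gives the desired estimate $|I(\lambda)|\leq C_{N}(\lambda d^{2})^{-N}$ immediately.
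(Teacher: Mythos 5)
The paper states this lemma without proof, citing H\"ormander and Fujiwara, and your argument --- integration by parts with the operator $L=\frac{1}{i\lambda|\nabla\phi|^{2}}\sum_{j}(\partial_{x_{j}}\phi)\partial_{x_{j}}$, followed by the inductive pointwise bound on $({}^{t}L)^{N}a$ --- is exactly the standard non-stationary-phase proof found in those references. The only point you should promote from a parenthetical to an explicit hypothesis is that, since $\mathrm{supp}\,a$ need not be compact, one must assume the derivatives of $\phi$ are suitably bounded (or of tame growth) there for the boundary terms to vanish and for $C_{N}'$ to be finite; with that caveat your outline is correct and complete in its essentials.
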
 

We are now in a position to state the stationary phase method 
(\cite{Hormander01}, \cite{Hormander02}, 
\cite{Duistermaat01}, \cite{Fujiwara1}). 
\begin{thm} 
\label{th00} 
Suppose that $\lambda \geq 1$, $a \in \mathcal{S}(\mathbb{R}^{n})$ 
and $\phi$ is a real-valued $C^{\infty}$ function on a neighborhood of 
$\bar{x}$ in $\mathbb{R}^{n}$  
such that $\bar{x}$ is an only non-degenerate critical point of $\phi$. 
Then 
there exist neighborhoods $U$ of $\bar{x}$ 
and $V$ of $0$ in $\mathbb{R}^{n}$ 
and $C^{\infty}$ diffeomorphism 
$\varPhi : V \longrightarrow U$ such that $x=\varPhi (y)$ 
for $x =(x_{1},\dots,x_{n}) \in U$ and $y =(y_{1},\dots,y_{n}) \in V$, 
and for each $N \in \mathbb{N}$, 
there exists a positive constant $C_{N}$ such that 
\begin{align} 
&\int_{\mathbb{R}^{n}} e^{i\lambda \phi (x)} a(x) dx 
= (2\pi)^{\frac{n}{2}} \frac{e^{i \frac{\pi}{4} \mathrm{sgn} \mathrm{Hess} \phi (\bar{x})}}{|\det \mathrm{Hess} \phi (\bar{x})|^{\frac{1}{2}}} e^{i\lambda \phi(\bar{x})} \notag \\ 
&\times \sum_{k=0}^{N-1} \frac{1}{k!} \Big( -i\frac{1}{2} \langle \mathrm{Hess} \phi (\bar{x})^{-1} D_{y},D_{y} \rangle \Big)^{k} 
\Big|_{y=0} \{ (a \circ \varPhi) J_{\varPhi} \} (y) \lambda^{-k-\frac{n}{2}} + R_{N}(\lambda) \notag 
\end{align} 
and 
\begin{align} 
|R_{N}(\lambda)| \leq C_{N} \lambda^{-N-\frac{n}{2}}, \notag 
\end{align} 
where 
$J_{\varPhi}(y) := 
\det (\partial x_{j}/\partial y_{k})_{j,k=1,\dots,n}$ is a Jacobian of $\varPhi$. 
\end{thm}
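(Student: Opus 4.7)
The proof strategy will be to decompose the integral via a cutoff into a stationary piece near $\bar{x}$ and a non-stationary remainder, apply the Morse lemma to flatten the phase on the stationary piece, and invoke the quadratic-phase expansion of Proposition \ref{quadratic phase}.

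First I would choose $\chi_1 \in C_0^{\infty}(U)$ equal to $1$ on a smaller neighborhood $U'$ of $\bar{x}$ with $\overline{U'} \subset U$, and write $a = a\chi_1 + a(1-\chi_1)$. After extending $\phi$ smoothly to $\mathbb{R}^n$ outside $U$ and using an auxiliary cutoff to handle the Schwartz tail of $a$, the support of $a(1-\chi_1)$ lies in a region where $|\nabla \phi| \geq d > 0$, since $\bar{x}$ is the unique critical point. Lemma \ref{Lax's technique} then yields bounds of order $O(\lambda^{-M})$ for every $M$ on the non-stationary contribution, which is absorbed into $R_N(\lambda)$.

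For the stationary piece $\int e^{i\lambda \phi(x)} a(x) \chi_1(x)\, dx$, I would apply the Morse lemma change of variables $x = \varPhi(y)$. The phase reduces to $\phi(\bar{x}) + \tfrac{1}{2}\langle A_0 y, y\rangle$, where $A_0 = \mathrm{diag}(1,\dots,1,-1,\dots,-1)$ has signature $\mathrm{sgn}\,\mathrm{Hess}\,\phi(\bar{x}) = 2p-n$, and the amplitude becomes $b(y) := (a\circ\varPhi)(y)\, J_{\varPhi}(y)\, (\chi_1\circ\varPhi)(y) \in C_0^{\infty}(V)$, which extends by zero to a Schwartz function on $\mathbb{R}^n$. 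Factoring out $e^{i\lambda \phi(\bar{x})}$ and applying Proposition \ref{quadratic phase} with matrix $A_0$ produces an expansion with Gauss factor $e^{i\frac{\pi}{4}\,\mathrm{sgn}\,A_0}$, leading determinant $|\det A_0|^{-1/2} = 1$, and remainder bounded by a constant times $\lambda^{-N-n/2}$ and finitely many Schwartz seminorms of $b$.

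The main subtlety will be reconciling the expansion coming from the Morse normal form $A_0$ with the stated expression involving $\mathrm{Hess}\,\phi(\bar{x})$. Differentiating $\phi(\varPhi(y)) - \phi(\bar{x}) = \tfrac{1}{2}\langle A_0 y, y\rangle$ twice at $y=0$ gives $(d\varPhi(0))^{T}\, \mathrm{Hess}\,\phi(\bar{x})\, (d\varPhi(0)) = A_0$, and since both matrices share the signature $2p-n$ their determinants have equal sign, so $|J_{\varPhi}(0)| = |\det \mathrm{Hess}\,\phi(\bar{x})|^{-1/2}$. Absorbing $J_{\varPhi}(0)$ into the leading order converts $|\det A_0|^{-1/2}$ into $|\det \mathrm{Hess}\,\phi(\bar{x})|^{-1/2}$, and tracking this identification through each higher-order coefficient recasts the differential operator in the form $\langle \mathrm{Hess}\,\phi(\bar{x})^{-1} D_y, D_y\rangle$ applied to $(a\circ\varPhi)\, J_{\varPhi}$ at $y=0$, matching the theorem. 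Combining the remainder estimate of Proposition \ref{quadratic phase} with the rapid non-stationary decay then yields $|R_N(\lambda)| \leq C_N \lambda^{-N-n/2}$.
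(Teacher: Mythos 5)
The paper itself gives no proof of Theorem~\ref{th00}: it is recalled from the literature, and the Morse lemma, Lemma~\ref{Lax's technique} and Proposition~\ref{quadratic phase} are stated immediately beforehand precisely as the ingredients of the standard argument you outline. So your overall route (cutoff near $\bar{x}$, non-stationary estimate on the complement, Morse change of variables, quadratic-phase expansion) is the intended one. One caveat on the first step: since $a\in\mathcal{S}(\mathbb{R}^{n})$ is not compactly supported and $\phi$ is only given near $\bar{x}$, the bound $|\nabla\phi|\geq d>0$ on $\mathrm{supp}\,(a(1-\chi_{1}))$ does \emph{not} follow from uniqueness of the critical point alone --- the gradient of the extension may decay at infinity --- so this must be imposed on the extension or handled by cutting $a$ down to compact support first; your ``auxiliary cutoff'' remark gestures at this but does not resolve it.

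The genuine gap is in your final paragraph. Write $H:=\mathrm{Hess}\,\phi(\bar{x})$. With your choice of $\varPhi$ (the signature normal form $A_{0}=\mathrm{diag}(1,\dots,1,-1,\dots,-1)$, as in the paper's Morse lemma), Proposition~\ref{quadratic phase} yields the prefactor $e^{i\frac{\pi}{4}\mathrm{sgn}A_{0}}$ with $|\det A_{0}|^{1/2}=1$ and the operator $\langle A_{0}D_{y},D_{y}\rangle$ acting on $(a\circ\varPhi)|J_{\varPhi}|$; its $k=0$ term is $e^{i\frac{\pi}{4}\mathrm{sgn}A_{0}}a(\bar{x})|J_{\varPhi}(0)|=e^{i\frac{\pi}{4}\mathrm{sgn}A_{0}}a(\bar{x})|\det H|^{-1/2}$, which is the correct leading asymptotics. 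The displayed formula of the theorem, read for that same $\varPhi$, gives instead $e^{i\frac{\pi}{4}\mathrm{sgn}H}|\det H|^{-1/2}a(\bar{x})J_{\varPhi}(0)$, which carries an extra factor $|\det H|^{-1/2}$ (test $\phi(x)=\frac{1}{2}cx^{2}$ with $c\neq 1$). You cannot simultaneously ``absorb $J_{\varPhi}(0)$ into the leading order'' to create the factor $|\det H|^{-1/2}$ \emph{and} keep $J_{\varPhi}$ inside the amplitude; and the claim that this identification ``tracks through each higher-order coefficient,'' converting $A_{0}$ into $H^{-1}$, is asserted rather than proved and is false for your $\varPhi$. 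The clean repair is to use the other normalization of the Morse coordinates: compose your $\varPhi$ with the linear map carrying $A_{0}$ to $H$, so that $\phi(\varPhi(y))=\phi(\bar{x})+\frac{1}{2}\langle Hy,y\rangle$ and $d\varPhi(0)=I$, hence $J_{\varPhi}(0)=1$, and then apply Proposition~\ref{quadratic phase} directly with $A=H$. The prefactor $e^{i\frac{\pi}{4}\mathrm{sgn}H}|\det H|^{-1/2}$, the operator $\langle H^{-1}D_{y},D_{y}\rangle$, and the amplitude $(a\circ\varPhi)J_{\varPhi}$ then appear exactly as in the statement, with no reconciliation step needed.
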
 

\section{Generalized Fresnel Integrals} 

In this section, 
we consider a generalization of the Fresnel integrals. 

\begin{lem} 
\label{Generalized the Fresnel integrals} 
Assume that $p > q > 0$. 
Then we have 
\begin{align} 
I_{p,q}^{\pm} 
:= \int_{0}^{\infty} e^{\pm ix^{p}} x^{q-1} dx 
= p^{-1} e^{\pm i\frac{\pi}{2} \frac{q}{p}} \varGamma \left( \frac{q}{p} \right), 
\label{I_pq} 
\end{align} 
where $\varGamma$ is the Gamma function 
and double signs $\pm$ in same order. 
\end{lem}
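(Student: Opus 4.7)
\medskip

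\noindent\textbf{Proof plan.} The plan is to apply Cauchy's integral theorem to the holomorphic function $f(z) = e^{\pm i z^{p}} z^{q-1}$, where $z^{q-1} = e^{(q-1) \mathrm{Log}\, z}$ is defined via the principal branch of the logarithm, exactly as in the classical proof of the Fresnel integrals but with the fan opened to the new angle $\pm \pi/(2p)$. Concretely, for the ``$+$'' case I would take the closed sector bounded by (a) the segment $[\varepsilon, R]$ on the positive real axis, (b) the arc $\gamma_{R} = \{Re^{i\theta} : 0 \leq \theta \leq \pi/(2p)\}$, (c) the segment from $R e^{i \pi/(2p)}$ to $\varepsilon e^{i \pi/(2p)}$ along the ray $\arg z = \pi/(2p)$, and (d) the small arc $\gamma_{\varepsilon}$ of radius $\varepsilon$. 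Since $f$ is holomorphic on an open neighbourhood of this sector, Cauchy's theorem gives that the contour integral is zero.

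The next step is to let $\varepsilon \to +0$ and $R \to \infty$ and identify the surviving pieces. On the tilted ray, the substitution $z = r e^{i\pi/(2p)}$ turns $e^{iz^{p}}$ into $e^{-r^{p}}$, and a further substitution $u = r^{p}$ in $\int_{0}^{\infty} e^{-r^{p}} r^{q-1} dr$ gives $p^{-1} \varGamma(q/p)$; collecting the factors from $z^{q-1} dz = e^{i q \pi/(2p)} r^{q-1} dr$ produces exactly the right-hand side of \eqref{I_pq}. The ``$-$'' case is handled by the mirror sector in the lower half plane.

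The key estimates to justify the limits are the following two. For the small arc, $|f(z)| \leq C \varepsilon^{q-1}$ on $\gamma_{\varepsilon}$, so the arc integral is $O(\varepsilon^{q}) \to 0$ because $q > 0$. For the large arc, the modulus is bounded by
\begin{align}
\Bigl| \int_{\gamma_{R}} f(z)\, dz \Bigr|
\leq R^{q} \int_{0}^{\pi/(2p)} e^{-R^{p} \sin(p\theta)}\, d\theta, \notag
\end{align}
and Jordan's inequality $\sin(p\theta) \geq 2 p \theta/\pi$ on $[0, \pi/(2p)]$ yields the bound $R^{q} \cdot \frac{\pi}{2p R^{p}} (1 - e^{-R^{p}}) = O(R^{q-p})$, which tends to $0$ precisely under the hypothesis $p > q$.

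The main obstacle, and indeed the reason the hypothesis $p > q$ appears, is the large-arc estimate: the decay of $e^{-R^{p} \sin(p\theta)}$ on the interior of the sector is fast, but the factor $R^{q}$ coming from $|z|^{q-1} \cdot |dz|$ can overwhelm it if $q$ is too large, and only the condition $q < p$ guarantees that the Jordan-type bound $O(R^{q-p})$ actually vanishes. The remainder of the argument is a routine change of variables reducing to $\varGamma(q/p)$.
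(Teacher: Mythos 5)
Your proposal is correct and follows essentially the same route as the paper: Cauchy's theorem applied to $e^{\pm iz^{p}}z^{q-1}$ on the sector of opening angle $\pi/(2p)$ bounded by the real segment $[\varepsilon,R]$, the large arc, the tilted ray, and the small arc, with the tilted ray yielding $p^{-1}e^{\pm i\frac{\pi}{2}\frac{q}{p}}\varGamma(q/p)$ after the substitution $u=r^{p}$. The paper's proof is only a one-sentence sketch naming the contour, so your Jordan-inequality bound $O(R^{q-p})$ for the large arc (which is exactly where $p>q$ is used) and the $O(\varepsilon^{q})$ bound for the small arc supply precisely the details it omits.
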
 

\begin{proof} 
Apply Cauchy's integral theorem to a homeomorphic function 
$e^{iz^{p}} z^{q-1}$ on the domain with 
the boundary $\sum_{j=1}^{4} C_{j}$ defined by 
$C_{1} := \{ z=r \in \mathbb{C} | 0 < \varepsilon  \leq r \leq R \}$, 
$C_{2} := \{ z=R e^{i\theta} \in \mathbb{C} | 0 \leq \theta \leq \pi /2p \}$, 
$C_{3} := \{ z=-se^{i(\pi /2p)} 
\in \mathbb{C} | -R \leq s \leq -\varepsilon \}$ 
and $C_{4} := \{ z=\varepsilon  e^{-i\tau} 
\in \mathbb{C} | -\pi /2p \leq \tau \leq 0 \}$. 
\end{proof} 

If $q \geq p > 0$, 
we can make a sense of \eqref{I_pq} as oscillatory integrals. 
In order to show this, 
we prepare the following lemma. 
\begin{lem} 
\label{Lax02} 
Assume that $p > 0$ and $q > 0$. 
Let $\chi \in \mathcal{S}(\mathbb{R})$ with 
$\chi(0) = 1$ and $0 < \varepsilon < 1$. 
Then we have 
\begin{enumerate} 
\item[(i)] 
For each $k \in \mathbb{Z}_{\geq 0}$, 
there exist the following independent of $\chi$ and $\varepsilon$ 
\begin{align} 
\lim_{\varepsilon \to +0} \int_{0}^{\infty} 
e^{\pm ix^{p}} x^{q-1} \frac{d^{k}}{dx^{k}} \chi (\varepsilon x) dx. \notag 
\end{align} 
\item[(ii)] 
If $k \ne 0$, 
then 
\begin{align} 
\lim_{\varepsilon \to +0} \int_{0}^{\infty} e^{\pm ix^{p}} x^{q-1} \frac{d^{k}}{dx^{k}} \chi (\varepsilon x) dx = 0. \notag 
\end{align} 
\item[(iii)] 
If $p > q$, 
then 
\begin{align} 
\tilde{I}_{p,q}^{\pm} 
:= Os\mbox{-}\int_{0}^{\infty} e^{\pm ix^{p}} x^{q-1} dx 
= \int_{0}^{\infty} e^{\pm ix^{p}} x^{q-1} dx 
= I_{p,q}^{\pm}. \notag 
\end{align} 
\end{enumerate} 
Double signs $\pm$ in same order. 
\end{lem}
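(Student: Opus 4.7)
The strategy is integration by parts with the operator $L = \frac{1}{\pm ipx^{p-1}}\frac{d}{dx}$, which satisfies $Le^{\pm ix^p} = e^{\pm ix^p}$ on $(0,\infty)$. Since $L$ is singular at the origin, I would first split the integration domain at a fixed $A > 0$ and treat $[0,A]$ and $[A,\infty)$ separately.

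On $[0,A]$ the weight $x^{q-1}$ is integrable because $q > 0$, and $\frac{d^k}{dx^k}\chi(\varepsilon x) = \varepsilon^k \chi^{(k)}(\varepsilon x)$ is uniformly bounded in $\varepsilon$, so dominated convergence combined with Proposition \ref{chi epsilon}(i),(iii) shows that the piece over $[0,A]$ tends to $\int_0^A e^{\pm ix^p}x^{q-1}\,dx$ when $k=0$ and to $0$ when $k \geq 1$.

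On $[A,\infty)$ I would iterate integration by parts $N$ times using the formal transpose $L^{t}f = \mp\frac{1}{ip}\frac{d}{dx}(x^{1-p}f)$. Each application of $L^t$ improves the decay at infinity by a factor $x^{-p}$ (up to Leibniz redistribution onto $\chi(\varepsilon x)$), so choosing $N > q/p$ yields an absolutely convergent interior integral together with boundary terms at $x = A$; the boundary contributions at infinity vanish thanks to the Schwartz decay of $\chi$. Proposition \ref{chi epsilon}(ii) supplies the $\varepsilon$-uniform majorant $|\partial_x^\alpha \chi(\varepsilon x)| \leq C_\alpha(1+x^2)^{-|\alpha|/2}$ needed to apply dominated convergence as $\varepsilon \to +0$. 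Of the many Leibniz terms appearing in $(L^t)^N[x^{q-1}\chi(\varepsilon x)]$, those with at least one derivative falling on $\chi$ vanish in the limit by Proposition \ref{chi epsilon}(iii), leaving only the $\chi$-free term; this proves existence and $\chi$-independence in (i), while for $k \geq 1$ the explicit prefactor $\varepsilon^k$ from $\frac{d^k}{dx^k}\chi(\varepsilon x)$ annihilates everything, proving (ii). For part (iii), when $p > q$ the improper integral $I_{p,q}^\pm$ exists by Lemma \ref{Generalized the Fresnel integrals}, and performing the same $N$-fold integration by parts directly on $\int_A^\infty e^{\pm ix^p}x^{q-1}\,dx$ produces the same boundary-plus-interior decomposition with $\chi \equiv 1$; matching this with the $\varepsilon\to 0$ limit above identifies $\tilde I_{p,q}^\pm = I_{p,q}^\pm$.

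The main obstacle is the combinatorial bookkeeping of the Leibniz expansion of $(L^t)^N[x^{q-1}\chi(\varepsilon x)]$: one has to verify that in the $\varepsilon \to 0$ limit exactly the one term carrying all derivatives on $x^{q-1}$ survives, and that the boundary contributions at $x = A$ assemble into an expression that, in part (iii), reproduces the classical improper integral obtained without the cutoff.
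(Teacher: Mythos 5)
Your proposal is correct and follows essentially the same route as the paper: a cutoff isolating the origin (the paper uses a smooth $\varphi$ where you use a sharp split at $A$, which only trades derivative-of-cutoff terms for boundary terms at $A$), repeated integration by parts with the operator $L=\frac{1}{px^{p-1}}\frac{1}{i}\frac{d}{dx}$ on the far region to gain integrability, and dominated convergence via Proposition \ref{chi epsilon}. The Leibniz bookkeeping you flag is exactly the content the paper leaves implicit, and your handling of it (uniform majorant from (ii), vanishing of every term carrying a derivative of $\chi$ from (iii)) is the intended argument.
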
 

\begin{proof} 
Dividing this integral by cutoff function 
$\varphi$ 
which has its compact support around the origin in $\mathbb{R}$. 
Repeating application of a differential operator 
$L = \frac{1}{px^{p-1}} \frac{1}{i} \frac{d}{dx}$ to $e^{\pm ix^p}$ 
on the support of $1-\varphi$ and integration by parts 
make the order of integrand descend to be integrable in the sense of Lebesgue.
Application of Lebsgue's dominant convergence theorem 
with Proposition \ref{chi epsilon} completes the proof. 
\end{proof} 

By Lemmas \ref{Generalized the Fresnel integrals} and \ref{Lax02}, 
we obtain the following theorem. 
\begin{thm}
\label{th01}
Assume that $p,q \in \mathbb{C}$. 
\begin{enumerate} 
\item[(i)] 
If $p > 0$ and $q > 0$, 
then 
\begin{align} 
\tilde{I}_{p,q}^{\pm} 
:= Os\mbox{-}\int_{0}^{\infty} e^{\pm ix^{p}} x^{q-1} dx 
= p^{-1} e^{\pm i\frac{\pi}{2} \frac{q}{p}} \varGamma \left( \frac{q}{p} \right). \notag 
\end{align} 
\item[(ii)] 
The $\tilde{I}_{p,q}^{\pm}$ 
can be extended non-zero meromorphic on $\mathbb{C}$ 
with poles of order 1 at $q = -pj$ for $j \in \mathbb{N}$ 
as to $q$ for each $p > 0$, 
and meromorphic on $\mathbb{C} \setminus \{ 0 \}$ with poles of order 1 at 
$p = -q/j$ for $j \in \mathbb{N}$ 
as to $p$ for each $q > 0$ by analytic continuation. 
\end{enumerate} 
Double signs $\pm$ in same order. 
We call $\tilde{I}^{\pm}_{p,q}$ ``generalized Fresnel integrals''. 
\end{thm}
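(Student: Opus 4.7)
The strategy is to bootstrap from the already-established case $p>q>0$ (Lemmas \ref{Generalized the Fresnel integrals} and \ref{Lax02}(iii)) to general $p,q>0$ via an integration-by-parts recursion that lowers $q$ by $p$ at each step, and then to read off the meromorphic continuation directly from the resulting closed form.

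For part (i), I would first derive the recursion
\begin{align}
\tilde{I}_{p,q}^{\pm} = \pm i\,\frac{q-p}{p}\,\tilde{I}_{p,q-p}^{\pm} \qquad (q>p). \notag
\end{align}
Starting from $\frac{d}{dx}\bigl(x^{q-p} e^{\pm ix^{p}}\bigr) = (q-p)\,x^{q-p-1} e^{\pm ix^{p}} \pm ip\,x^{q-1} e^{\pm ix^{p}}$, I would solve for $x^{q-1} e^{\pm ix^{p}}$, multiply by $\chi(\varepsilon x)$, integrate from $0$ to $\infty$, and integrate by parts in the derivative piece. This produces three terms: (a) a boundary contribution that vanishes at $\infty$ because $\chi\in\mathcal{S}(\mathbb{R})$ and vanishes at $0$ because $q>p$; (b) a term with $\chi'(\varepsilon x)$ carrying a factor $\varepsilon$, which tends to $0$ as $\varepsilon\to+0$ by Lemma \ref{Lax02}(ii) with $k=1$ applied to the exponent $\tilde q = q-p+1>0$; and (c) the scalar multiple $-\frac{q-p}{\pm ip}\,\tilde{I}_{p,q-p}^{\pm}$, yielding the recursion. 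Iterating until the exponent drops into $(0,p)$ and then invoking Lemma \ref{Lax02}(iii) together with Lemma \ref{Generalized the Fresnel integrals} as the base case reduces everything to $p^{-1} e^{\pm i\frac{\pi}{2}\frac{q}{p}}\varGamma(q/p)$; consistency with the recursion is a one-line check using $\varGamma(z)=(z-1)\varGamma(z-1)$ and $\frac{1}{\pm i}=\mp i$. The one exceptional landing is $q=p$, where the boundary contribution at $x=0$ no longer vanishes and produces $-\chi(0)/(\pm ip) = \pm i/p$, which matches $p^{-1}e^{\pm i\pi/2}\varGamma(1)$, so the induction closes for all $p,q>0$. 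The principal technical care point is bookkeeping of the $\varepsilon\to+0$ limits during the integration by parts: each intermediate expression must be handled as an oscillatory integral and its limit extracted via Lemma \ref{Lax02}, rather than interpreted naively as an absolutely convergent integral.

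For part (ii), the explicit formula from (i) does essentially all the work. Fixing $p>0$, the right-hand side $p^{-1} e^{\pm i\frac{\pi}{2}\frac{q}{p}}\varGamma(q/p)$ is an entire function of $q$ multiplied by $\varGamma(q/p)$, which is meromorphic on $\mathbb{C}$ with simple poles precisely at $q/p\in\mathbb{Z}_{\leq 0}$, i.e.\ at $q=-pj$; since $\varGamma$ is nowhere zero and the exponential never vanishes, the continuation is not identically zero. Fixing $q>0$ and viewing the expression as a function of $p$, the exponential $e^{\pm i\frac{\pi}{2}\frac{q}{p}}$ is holomorphic on $\mathbb{C}\setminus\{0\}$ (with an essential singularity at the origin, which explains the exclusion of $p=0$), and $\varGamma(q/p)$ contributes simple poles exactly where $q/p$ is a nonpositive integer, i.e.\ $p=-q/j$ for $j\in\mathbb{N}$. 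The analytic continuation of $\tilde{I}_{p,q}^{\pm}$ is therefore given by this closed form, which completes the proof.
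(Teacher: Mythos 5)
Your proposal is correct and follows essentially the same route as the paper: the paper's (very terse) proof likewise handles $p>q$ via Lemma \ref{Lax02}(iii) and Lemma \ref{Generalized the Fresnel integrals}, reduces $q\geq p$ by repeated integration by parts to that base case, and obtains (ii) from the analytic continuation of the Gamma function. Your write-up simply supplies the details the paper omits (the explicit recursion, the vanishing of the $\chi'(\varepsilon x)$ term via Lemma \ref{Lax02}(ii), and the boundary contribution in the landing case $q=p$), all of which check out.
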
 

\begin{proof} 
When $p > q$, applying Lemma \ref{Lax02} (iii) 
and Lemma \ref{Generalized the Fresnel integrals} give (i). 
When $q \geq p$, repeating use of integration by part 
combining with Lemma \ref{Generalized the Fresnel integrals} show (i). 
Using analytic continuation of the Gamma function with (i) proves (ii). 
\end{proof} 

Using the theorem above, 
we can extend the Euler Beta function as follow. 
\begin{prop} 
Assume that $p_{j} > 0$ and $q_{j} 
\in \mathbb{C} \setminus \{ -p_{j}\mathbb{N} \}$ for $j=1,2,3$. 
Let 
\begin{align} 
\tilde{B}^{\pm}(p_{1},p_{2},p_{3};q_{1},q_{2},q_{3}) 
:= e^{\mp i \frac{\pi}{2} \left( \frac{q_{1}}{p_{1}} + 
\frac{q_{2}}{p_{2}} - \frac{q_{3}}{p_{3}} \right)} 
\frac{p_{1} p_{2}}{p_{3}} \frac{\tilde{I}_{p_{1},q_{1}}^{\pm} 
\tilde{I}_{p_{2},q_2}^{\pm}}{\tilde{I}_{p_{3},q_{3}}^{\pm}}. \notag 
\end{align} 
Then 
\begin{align} 
\tilde{B}^{\pm}(1,1,1;q_{1},q_{2},q_{1}+q_{2}) = B(q_{1},q_{2}), \notag 
\end{align} 
where $B(x,y)$ is the Euler Beta function 
and double signs $\pm$ in same order. 
\end{prop}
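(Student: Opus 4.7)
The plan is to simply unwind the definition of $\tilde{B}^{\pm}$ using the closed form of the generalized Fresnel integrals established in Theorem \ref{th01} (i). The computation is essentially algebraic, with no analytic obstacle; the key identity to land on is the classical $B(q_{1},q_{2}) = \varGamma(q_{1})\varGamma(q_{2})/\varGamma(q_{1}+q_{2})$.

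First I would substitute $p_{1}=p_{2}=p_{3}=1$ and $q_{3}=q_{1}+q_{2}$ into Theorem \ref{th01} (i) to get the three explicit values
\begin{align}
\tilde{I}_{1,q_{j}}^{\pm} = e^{\pm i\frac{\pi}{2}q_{j}}\,\varGamma(q_{j}) \quad (j=1,2), \qquad
\tilde{I}_{1,q_{1}+q_{2}}^{\pm} = e^{\pm i\frac{\pi}{2}(q_{1}+q_{2})}\,\varGamma(q_{1}+q_{2}). \notag
\end{align}
Forming the ratio $\tilde{I}_{1,q_{1}}^{\pm}\tilde{I}_{1,q_{2}}^{\pm}/\tilde{I}_{1,q_{1}+q_{2}}^{\pm}$, the exponential phases telescope, leaving $\varGamma(q_{1})\varGamma(q_{2})/\varGamma(q_{1}+q_{2})$.

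Next I would check the prefactor. With $p_{1}=p_{2}=p_{3}=1$ the rational factor $p_{1}p_{2}/p_{3}$ equals $1$, and the argument of the exponential out front satisfies
\begin{align}
\frac{q_{1}}{p_{1}}+\frac{q_{2}}{p_{2}}-\frac{q_{3}}{p_{3}} = q_{1}+q_{2}-(q_{1}+q_{2}) = 0, \notag
\end{align}
so the correcting phase $e^{\mp i\frac{\pi}{2}(\cdot)}$ collapses to $1$. Thus $\tilde{B}^{\pm}(1,1,1;q_{1},q_{2},q_{1}+q_{2})$ reduces to $\varGamma(q_{1})\varGamma(q_{2})/\varGamma(q_{1}+q_{2})$, which is exactly $B(q_{1},q_{2})$ by the standard relation between the Euler Beta and Gamma functions.

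There is no genuine obstacle here: the definition of $\tilde{B}^{\pm}$ has been calibrated precisely so that the oscillatory phases introduced by Theorem \ref{th01} cancel. The only minor thing to note is that the hypothesis $q_{j}\in\mathbb{C}\setminus\{-p_{j}\mathbb{N}\}$ (together with the analogous condition on $q_{3}=q_{1}+q_{2}$ when $p_{3}=1$) is exactly what is needed to keep all three $\varGamma$ values finite and the denominator $\tilde{I}_{1,q_{1}+q_{2}}^{\pm}$ nonzero, so the quotient is well defined and the identity follows unconditionally on the stated domain.
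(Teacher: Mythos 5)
Your computation is correct and is exactly the immediate verification the paper intends (it omits the proof entirely, the statement being a direct consequence of the closed form for $\tilde{I}^{\pm}_{p,q}$). The only nitpick is that for general complex $q_{j}$ you should invoke the meromorphic continuation in Theorem \ref{th01} (ii) rather than part (i), which is stated only for $q>0$; the formula $\tilde{I}^{\pm}_{1,q}=e^{\pm i\frac{\pi}{2}q}\varGamma(q)$ then holds on the stated domain and your cancellation of phases goes through verbatim.
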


\section{An Extension of the Stationary Phase Method}

In this section, 
we consider an extension of the stationary phase method in one variable 
according to the generalized Fresnel integrals developed as above. 
\begin{thm} 
\label{Extension of the Stationary Phase Method} 
Let $p > 0$, $\lambda \geq 1$ and $a \in \mathcal{S}(\mathbb{R})$. 
\begin{enumerate} 
\item[(i)] 
If $q > 0$, 
then 
\begin{align} 
I_{p,q}^{\pm}[a](\lambda) 
:= \int_{0}^{\infty} e^{\pm i\lambda x^{p}} x^{q-1} a(x) dx \notag 
\end{align} 
is absolutely convergent. 
\item[(ii)] 
If $q > p$, 
then 
there exists a positive constant $C_{q}$ such that 
\begin{align} 
| I_{p,q}^{\pm}[a](\lambda) | \leq C_{q} \lambda ^{-\frac{q}{p}+1}. \notag 
\end{align} 
\item[(iii)] 
If $p > 0$, 
then for any $N \in \mathbb{N}$ such that $N+1 > p$, 
\begin{align} 
\int_{0}^{\infty} e^{\pm i\lambda x^{p}} a(x) dx 
&= p^{-1} \sum_{k=0}^{N-1} 
e^{\pm i\frac{\pi}{2} \frac{k+1}{p}} \varGamma \left( \frac{k+1}{p} \right) \frac{a^{(k)}(0)}{k!} \lambda ^{-\frac{k+1}{p}} \notag \\ 
&\hspace*{0.3cm}+ O\left( \lambda ^{-\frac{N+1}{p}+1} \right). \notag 
\end{align} 
\item[(iv)] 
If $m \in \mathbb{N}$, 
then for any $N \in \mathbb{N}$ such that $N+1 > m$, 
\begin{align} 
\int_{-\infty}^{\infty} e^{\pm i\lambda x^{m}} a(x) dx 
&= m^{-1} \sum_{k=0}^{N-1} \left( e^{\pm i\frac{\pi}{2} \frac{k+1}{m}} 
+ (-1)^{k} e^{\pm (-1)^{m} i\frac{\pi}{2} \frac{k+1}{m}} \right) \notag \\ 
&\hspace{0.4cm}\times \varGamma \left( \frac{k+1}{m} \right) \frac{a^{(k)}(0)}{k!} \lambda ^{-\frac{k+1}{m}} + O\left( \lambda ^{-\frac{N+1}{m}+1} \right). \notag 
\end{align} 
\end{enumerate} 
Double signs $\pm$ in same order. 
\end{thm}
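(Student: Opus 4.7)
My approach is to derive the four parts sequentially, with (iii) carrying the main weight. Part (i) is immediate: because $|e^{\pm i\lambda x^p}|=1$ and $a\in\mathcal{S}(\mathbb{R})$, the modulus $x^{q-1}|a(x)|$ is integrable on $[0,\infty)$ whenever $q>0$, with integrability at the origin secured by $q-1>-1$ and at infinity by Schwartz decay. For (ii), I would use the transport identity $e^{\pm i\lambda x^p}=L(e^{\pm i\lambda x^p})$ with $L=(\pm ip\lambda x^{p-1})^{-1}\,d/dx$ and integrate by parts, producing
\begin{align*}
I_{p,q}^{\pm}[a](\lambda)=\frac{-1}{\pm ip\lambda}\int_0^\infty e^{\pm i\lambda x^p}\bigl((q-p)x^{q-p-1}a(x)+x^{q-p}a'(x)\bigr)\,dx.
\end{align*}
The boundary contribution at $0$ is killed by $q>p$, and at infinity by Schwartz decay. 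Each such step reduces the effective exponent by $p$ and introduces a factor $\lambda^{-1}$, so iterating as long as the current exponent exceeds $p$ yields, after $k=\lfloor q/p\rfloor$ steps (or $k=q/p-1$ when $q/p\in\mathbb{N}$), an absolutely integrable integrand and an overall factor $\lambda^{-k}$. Since $-k\le -q/p+1$ in every case, the bound $C_q\lambda^{-q/p+1}$ follows.

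Part (iii) is the heart of the theorem. I would fix a cutoff $\varphi\in C_0^\infty(\mathbb{R})$ with $\varphi\equiv 1$ on a neighborhood of $0$ and decompose
\begin{align*}
a(x)=T_{N-1}(x)\varphi(x)+x^{N}c_{N}(x),\qquad T_{N-1}(x):=\sum_{k=0}^{N-1}\frac{a^{(k)}(0)}{k!}x^{k},
\end{align*}
where the globally defined $c_N(x):=(a(x)-T_{N-1}(x)\varphi(x))/x^{N}$ is Schwartz: near $0$ it reduces to the Taylor integral remainder $\frac{1}{(N-1)!}\int_0^1(1-t)^{N-1}a^{(N)}(tx)\,dt$, while on $\{\varphi=0\}$ it coincides with $a(x)/x^{N}$. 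This decomposition gives
\begin{align*}
\int_0^\infty e^{\pm i\lambda x^p}a(x)\,dx=\sum_{k=0}^{N-1}\frac{a^{(k)}(0)}{k!}\int_0^\infty e^{\pm i\lambda x^p}x^{k}\varphi(x)\,dx+I_{p,N+1}^{\pm}[c_N](\lambda),
\end{align*}
and the second term, by part (ii) applied with amplitude $c_N\in\mathcal{S}(\mathbb{R})$ and $N+1>p$, is $O(\lambda^{-(N+1)/p+1})$. For each main term I would write $\varphi=1-(1-\varphi)$ and apply the substitution $y=\lambda^{1/p}x$ to recognize $\int_0^\infty e^{\pm i\lambda x^p}x^{k}\,dx$ (in the oscillatory sense) as $\lambda^{-(k+1)/p}\tilde{I}_{p,k+1}^{\pm}$ via Theorem~\ref{th01}; the companion piece $\int_0^\infty e^{\pm i\lambda x^p}x^{k}(1-\varphi(x))\,dx$ has amplitude vanishing in a neighborhood of $0$, and iterated integration by parts against $L$, performed at the regularized level $\chi(\varepsilon x)$, drives it to $O(\lambda^{-M})$ for every $M$. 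Substituting $\tilde{I}_{p,k+1}^{\pm}=p^{-1}e^{\pm i\frac{\pi}{2}\frac{k+1}{p}}\varGamma\bigl(\frac{k+1}{p}\bigr)$ from Theorem~\ref{th01} completes (iii).

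For (iv), I would split the integral at the origin; the substitution $x\mapsto-y$ converts the $(-\infty,0)$ piece into $\int_0^\infty e^{\pm i\lambda(-1)^{m}y^{m}}a(-y)\,dy$. Applying (iii) to this with amplitude $\tilde{a}(y):=a(-y)$, for which $\tilde{a}^{(k)}(0)=(-1)^{k}a^{(k)}(0)$, contributes $(-1)^{k}e^{\pm(-1)^{m}i\frac{\pi}{2}\frac{k+1}{m}}$ per index; combining with the $(0,\infty)$ contribution $e^{\pm i\frac{\pi}{2}\frac{k+1}{m}}$ yields the announced two-term factor and remainder $O(\lambda^{-(N+1)/m+1})$. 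The main obstacle throughout is the rigorous passage to the oscillatory limit in (iii): the individual integrals $\int_0^\infty e^{\pm i\lambda x^p}x^{k}\,dx$ are not absolutely convergent, so the scaling identification with $\tilde{I}_{p,k+1}^{\pm}$ and the $O(\lambda^{-M})$ bound on the companion piece must be justified through the regularized definition of the oscillatory integral, leaning on Lemma~\ref{Lax02}.
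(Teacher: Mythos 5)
Your proposal is correct and follows essentially the same route as the paper's (much terser) proof: direct comparison for (i), iterated integration by parts for (ii), Taylor expansion at $0$ plus the scaling $y=\lambda^{1/p}x$ and Theorem \ref{th01} (i) for the principal part with (ii) controlling the remainder in (iii), and the substitution $x=-y$ reducing (iv) to (iii). Your explicit cutoff $\varphi$ making the Taylor remainder Schwartz, and your handling of the non--absolutely-convergent pieces at the regularized level, are details the paper leaves implicit but are the right way to fill them in.
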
 

\begin{proof} 
(i) Put $f(x) = e^{i\lambda x^{p}} x^{q-1} a(x)$, 
then $f(x) = O(x^{\alpha})~(x \to +0)$ when $\alpha = q-1>-1$ 
and $f(x) = O(x^{\beta})~(x \to \infty)$ when $\beta = q-1-(q+1) < -1.$
This shows (i). 
According to integration by part, we show the second assertion (ii). 
As to (iii), 
first we divide this integral into the principal part 
and the remainder term 
by Taylor expansion of $a$ at $0$. 
And then applying Theorem \ref{th01} (i) 
after using change of variable $y=\lambda^{1/p}x$ in the former 
and applying (ii) in the latter completes the proof. 
(iv)  If the variable is negative, we first change a variable $x = -y$.  
Then (iii) proves (iv). 
\end{proof} 

To the end of the present note, 
we note that we obtain asymptotic expansion of oscillatory integral in several variables with not only the Morse type phase function but also singular types 
for examples $A, E$-phase functions. 
For details, see \cite{Nagano-Miyazaki02}.

\end{document}